\title[Teichm\"uller and Alexander polynomials]{Teichm\"uller polynomials, Alexander polynomials and finite covers of surfaces}
\newtheorem{thm}{Theorem}
\newtheorem{lemma}[thm]{Lemma}
\newtheorem{cor}[thm]{Corollary}
\newtheorem{prop}[thm]{Proposition}
\numberwithin{thm}{section}
\newcommand{\eps}{\epsilon}
\newcommand{\bZ}{\mathbb{Z}}
\newcommand{\bR}{\mathbb{R}}
\newcommand{\bC}{\mathbb{C}}
\DeclareMathOperator{\Hom}{Hom}
\DeclareMathOperator{\Mod}{Mod}
\DeclareMathOperator{\rk}{rk}
\newcommand{\bQ}{\mathbb{Q}}
\newcommand{\bU}{\mathbb{U}}
\author[T. Koberda]{Thomas Koberda}
\address{Department of Mathematics\\ Harvard University\\ 1 Oxford St.\\ Cambridge, MA 02138 }
\email{ koberda@math.harvard.edu}
\subjclass[2010]{Primary: 57M10; Secondary: 57M27}
\keywords{Pseudo-Anosov homeomorphism, twisted homology, Teichm\"uller polynomial, Alexander polynomial, Thurston norm ball}
\begin{document}
\begin{abstract}
In this article we explore a connection between finite covers of surfaces and the Teichm\"uller polynomial of a fibered face of a hyperbolic $3$--manifold.  We consider the action of a homological pseudo-Anosov homeomorphism $\psi$ on the homology groups of a class of finite abelian covers of a surface $\Sigma_{g,n}$.  Eigenspaces of the deck group actions on these covers are naturally parametrized by rational points on a torus.  We show that away from the trivial eigenspace, the spectrum of the action of $\psi$ on these eigenspaces is bounded away from the dilatation of $\psi$.  We show that the action $\psi$ on these eigenspaces is governed by the Teichm\"uller polynomial.
\end{abstract}
\maketitle
\begin{center}
\today
\end{center}
\section{Introduction}
Let $M$ be a matrix over the ring of integral Laurent polynomials in some (finite) number of variables.  We say that $M$ is Perron--Frobenius if there is a power $M^k$ of $M$ such that every entry of $M^k$ is a nontrivial Laurent polynomial with positive coefficients.  In this note, we shall establish the following fact:

\begin{prop}\label{p:pf}
Let $M$ be a square Perron--Frobenius matrix over $\bZ[t,t^{-1}]$ and let $\phi\in\Hom(\bZ,S^1)$ be a non--torsion point.  Then the spectral radius of $\phi(M)$ is strictly smaller than that of $\phi_0(M)$, where $\phi_0$ is the trivial representation of $\bZ$.
\end{prop}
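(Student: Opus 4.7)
The plan is to combine an entrywise triangle inequality with Wielandt's sharpening of the Perron--Frobenius theorem. After replacing $M$ by $N := M^k$ from the Perron--Frobenius hypothesis, so that every $N_{ij} = \sum_n c_{ij,n}\,t^n$ is a nonzero Laurent polynomial with $c_{ij,n}\ge 0$, it suffices to prove $\rho(\phi(N))<\rho(\phi_0(N))$, since each spectral radius is merely raised to the $k$-th power. Writing $\zeta := \phi(1) \in S^1$, the triangle inequality yields the entrywise domination
\[
|\phi(N)_{ij}| \;=\; \Bigl|\sum_n c_{ij,n}\zeta^n\Bigr| \;\le\; \sum_n c_{ij,n} \;=\; \phi_0(N)_{ij},
\]
so $\phi(N)$ is dominated in modulus by the strictly positive (hence irreducible and primitive) integer matrix $\phi_0(N)$.

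I would next invoke Wielandt's theorem: if $A$ is non-negative irreducible with $|B_{ij}|\le A_{ij}$ entrywise, then $\rho(B)\le\rho(A)$, with equality if and only if $B = e^{i\theta}\,D\,A\,D^{-1}$ for some real $\theta$ and some unitary diagonal $D$. Taken with $B=\phi(N)$ and $A=\phi_0(N)$, this immediately gives the weak inequality $\rho(\phi(N))\le\rho(\phi_0(N))$ and reduces the proposition to excluding the Wielandt equality case.

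To rule out equality, assume $\phi(N) = e^{i\theta}D\,\phi_0(N)\,D^{-1}$. Reading off entries gives $\sum_n c_{ij,n}\zeta^n = e^{i\theta}D_i\overline{D_j}\sum_n c_{ij,n}$, and the equality case of the triangle inequality forces $\zeta^n$ to be constant on the support of each $N_{ij}$. Since $\phi$ is non-torsion, $\zeta^{n_1}=\zeta^{n_2}$ forces $n_1=n_2$, so every entry of $N$ would have to collapse to a single monomial $c_{ij}t^{n_{ij}}$ satisfying the phase-matching condition $\zeta^{n_{ij}} = e^{i\theta}D_i\overline{D_j}$. The main obstacle is then to extract a contradiction from this monomial-and-phase rigidity: traversing a cycle $i_1\to i_2\to\cdots\to i_\ell\to i_1$ in the support graph of $N$, which is strongly connected by primitivity, gives $\zeta^{n_{i_1 i_2}+\cdots+n_{i_\ell i_1}} = e^{i\ell\theta}$, and comparing two cycles of the same length with distinct total exponent, which must exist for otherwise $N$ is diagonally conjugate to a single monomial times a positive integer matrix, contrary to the nontrivial Laurent polynomial clause of the Perron--Frobenius hypothesis, produces a relation $\zeta^D = 1$ with $D \ne 0$, contradicting non-torsion. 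Converting primitivity together with the nontriviality of the entries into the existence of two cycles with distinct total exponents is where I expect the real work to lie; everything before it is formal.
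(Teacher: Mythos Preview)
Your route through Wielandt's theorem is genuinely different from the paper's and is sound up to the step you flag. The paper proceeds more directly: after replacing $M$ by a power whose every entry has spread at least one, it sets $C=\max_{i,j}|\phi(M)_{ij}|/\phi_0(M)_{ij}$, observes that $C<1$ because each entry is a non--monomial Laurent polynomial with positive coefficients evaluated at an irrational point of the circle, and then iterates the entrywise bound to obtain $\|\phi(M)^n\|_1\le C^n\,\|\phi_0(M)^n\|_1$, hence $\rho(\phi(M))\le C\cdot\rho(\phi_0(M))$. This avoids Wielandt entirely and delivers an explicit multiplicative gap; your argument is structurally cleaner but leans on a black box and does not by itself produce a quantitative estimate.

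Your reduction of the Wielandt equality case to the statement ``$N$ is diagonally conjugate over $\bZ[t,t^{-1}]$ to a single monomial times a positive integer matrix'' is correct, but the contradiction you propose is not. The paper's Perron--Frobenius hypothesis only asks that the entries of some power be \emph{nonzero} Laurent polynomials with positive coefficients, and monomials satisfy this; the $1\times 1$ matrix $M=(t)$ meets the stated hypothesis yet has $\rho(\phi(M))=\rho(\phi_0(M))=1$, so the proposition is literally false without an additional assumption. The paper's own proof tacitly imports the extra hypothesis that some entry has spread at least one and then uses the lemma on spreads to propagate this to every entry of a high power; in the intended application to $P_E$ this is exactly what Lemma~\ref{l:spread2} supplies. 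Once you grant yourself that, take $N$ to already have every entry of spread at least one: the Wielandt equality case forces every entry of $N$ to be a monomial, which is now an immediate contradiction, and your cycle argument becomes unnecessary.
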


In general, we will write $H$ for a finitely generated, torsion--free abelian group, and $\bZ[H]$ is the ring of integral Laurent polynomials over $H$.
We remark that if $M$ is a Perron--Frobenius matrix over the ring of Laurent polynomials $\bZ[H]$ then applying the trivial representation of $H$ to $M$ gives us a usual integral Perron--Frobenius matrix, which in turn has a unique real eigenvalue of maximal modulus.

Let $M$ be a fibered hyperbolic manifold with first Betti number $h+1$, where $h=\rk H\geq 1$.  Choosing a fibration of $M$, we can decompose $H_1(M,\bZ)/(Torsion)$ as $\bZ\oplus H$.  The corresponding fibered face of $M$ has an associated {\bf Teichm\"uller polynomial} $\theta(u,t)\in\bZ[\bZ\oplus H]$, with $t\in H$.  Proposition \ref{p:pf} has the following corollary:

\begin{cor}\label{c:teich}
Let $K$ be the largest root of $\theta(u,\phi_0(t))$, and let $\phi\in\Hom(H,S^1)$ be a non--torsion point.  The largest root of the polynomial $\theta(u,\phi(t))$ has modulus strictly less than $K$.
\end{cor}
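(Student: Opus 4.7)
The plan is to reduce the statement to Proposition \ref{p:pf} via McMullen's original construction of the Teichm\"uller polynomial. Concretely, associated to the fibered face is a pseudo-Anosov monodromy $\psi$ on the fiber $\Sigma$ with an invariant train track $\tau$ whose transition matrix $P_A$ is Perron--Frobenius. McMullen shows that lifting $\tau$ to the free abelian cover of $\Sigma$ whose deck group corresponds to $H$ yields a matrix $P_A(t)$ with entries in $\bZ[H]$, which is again Perron--Frobenius (after possibly replacing $\psi$ by a power so that $\tau$ has a transition matrix with connected underlying graph), and moreover $\theta(u,t)$ divides the characteristic polynomial $\det(uI - P_A(t))$ in $\bZ[\bZ \oplus H]$.

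Granting this, the proof of the corollary is short. For any $\phi \in \Hom(H, S^1)$, specialization commutes with taking the determinant, so
\[
\theta(u,\phi(t)) \;\Big|\; \det\bigl(uI - \phi(P_A)\bigr)
\]
in $\bC[u]$. Hence the roots of $\theta(u,\phi(t))$ are a subset of the eigenvalues of $\phi(P_A)$, and in particular the maximum modulus of a root of $\theta(u,\phi(t))$ is bounded above by the spectral radius of $\phi(P_A)$. For the trivial character $\phi_0$, the matrix $\phi_0(P_A)$ is an integral Perron--Frobenius matrix whose spectral radius is exactly the dilatation $\lambda$ of $\psi$; this is realized as a root of $\theta(u,\phi_0(t))$, so $K = \lambda$ equals the spectral radius of $\phi_0(P_A)$. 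Applying Proposition \ref{p:pf} to $P_A(t)$ (with $H$ in place of $\bZ$, noting the proof of that proposition goes through verbatim for arbitrary finitely generated torsion-free $H$), we conclude that the spectral radius of $\phi(P_A)$ is strictly less than $K$ whenever $\phi$ is non-torsion, and therefore so is the largest modulus of a root of $\theta(u,\phi(t))$.

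The main obstacle is the first step: isolating a genuinely Perron--Frobenius presentation $P_A(t) \in \mathrm{Mat}(\bZ[H])$ of the monodromy action whose characteristic polynomial is divisible by $\theta(u,t)$. This requires care because the Teichm\"uller polynomial is defined as the minimal divisor with a certain specialization property, rather than directly as a characteristic polynomial; one must invoke McMullen's theorem that relates $\theta$ to the lifted train track transition matrix, and verify that the hypothesis of Proposition \ref{p:pf} (positivity of some power) is met, perhaps after replacing $\psi$ by an appropriate iterate so that the lifted train track is mixing. Once this geometric/combinatorial input is in place, the rest of the argument is purely a consequence of Perron--Frobenius theory over $\bZ[H]$.
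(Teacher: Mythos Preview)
Your overall strategy matches the paper's: both reduce to the Perron--Frobenius transition matrix $P_E$ for the lifted train track and invoke Proposition~\ref{p:pf}. The divisibility $\theta(u,t)\mid\det(uI-P_E(t))$ and the identification $K=\lambda(\psi)=\mathrm{spec.rad.}(\phi_0(P_E))$ are exactly what the paper uses.

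The gap is your parenthetical claim that the proof of Proposition~\ref{p:pf} ``goes through verbatim for arbitrary finitely generated torsion-free $H$.'' It does not. Take $H=\bZ^2$ with basis $t_1,t_2$ and the $1\times 1$ Perron--Frobenius matrix $M=(t_1+t_1^{-1})$; the character $\phi$ with $\phi(t_1)=1$ and $\phi(t_2)$ irrational is non-torsion, yet $\phi(M)=\phi_0(M)=2$. The key inequality $C=\max_{i,j}|a_{i,j,1}|/b_{i,j,1}<1$ in the proof of Proposition~\ref{p:pf} requires that every entry of $M$ involve at least two monomials sent to distinct arguments by $\phi$, and the Perron--Frobenius hypothesis alone does not force this in the multivariable setting.

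What the paper actually does is pick a basis $\{t_1,\dots,t_h\}$ for $H$, observe that since $\phi$ is non-torsion some $\phi(t_i)$ is irrational, and then invoke Lemma~\ref{l:spread2}: after passing to a power, every entry of $P_E$ has spread at least one \emph{in each variable separately}. This lemma is not a formal consequence of Perron--Frobenius; it rests on the geometric fact (Lemma~\ref{l:spread1}) that the Teichm\"uller polynomial cannot be made independent of any coordinate, which in turn comes from the dilatation blowing up at the boundary of the fibered face. With spread in $t_i$ guaranteed, one freezes the other variables and applies Proposition~\ref{p:pf} in the single variable $t_i$. Your last paragraph flags the need for geometric input to get positivity, but the more delicate input---spread in every coordinate direction---is what you are missing.
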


Since we are interested in the actions of pseudo-Anosov homeomorphisms on the homology of finite covers of the base surface, it would be useful to relate Corollary \ref{c:teich} to homology of finite covers of the base surface.

Let $\Sigma_{g,n}$ be a surface of genus $g$ with $n$ punctures and let $\psi\in\Mod_{g,n}$, the mapping class group of $\Sigma_{g,n}$.  Let $\Sigma_{h,k}\to\Sigma_{g,n}$ be a finite cover to which $\psi$ lifts.  Each lift of $\psi$ acts on the homology of $\Sigma_{h,k}$.  This action can be very complicated, especially as we allow the cover itself to vary (cf. \cite{KobGeom} or \cite{nilpcover}, for instance).  In this note, we will be interested in the spectrum of these actions as the cover is allowed to vary over a certain class of abelian covers.

Let $\psi$ be a pseudo-Anosov homeomorphism.  Recall that $\psi$ stabilizes a canonical quadratic differential $q\in \mathcal{Q}(\Sigma_{g,n})$.  The quadratic differential $q$ gives rise to two foliations which, away from the zeros of $q$, furnish local charts to $\bC$ together with a horizontal and vertical foliation.  The action of $\psi$ stretches the horizontal foliation by a factor of $K$ and contracts the vertical foliation by a factor of $K$, where $K$ is the {\bf dilatation} of $\psi$.

When $q=\omega^2$ for some $1$--form $\omega$, the mapping class $\psi$ is termed a {\bf homological pseudo-Anosov}.  In this case, the foliations of $q$ admit global orientations which are preserved by $\psi$.  It turns out in this case that $\omega$ gives rise an eigenclass for the action of $\psi$ on $H^1(\Sigma_{g,n},\bR)$ whose corresponding eigenvalue is also $K$.

It is known (cf. \cite{KS} and \cite{LT}) that if $\psi$ is a homological pseudo-Anosov homeomorphism then $K$ is a simple eigenvalue of largest modulus.  We will be interested in the size of the second largest eigenvalue of the action of $\psi$ on the homology of certain abelian covers of $\Sigma_{g,n}$.

The connection between Teichm\"uller polynomials and homology is achieved by appealing to the theory of the {\bf Alexander polynomial} $A(u,t)$.  The Alexander polynomial can be defined for any CW complex (see \cite{ctm4}), and in the case of a fibered hyperbolic $3$--manifold, the Alexander polynomial is very useful for describing the action of a mapping class on the homology of certain finite abelian covers of the base surface.  The following result is not particularly new, but will allow us to see the connection between Teichm\"uller polynomials and homology of finite covers more clearly:

\begin{prop}\label{p:alexteich}
Let $\Sigma\to M\to S^1$ be a fibered hyperbolic $3$--manifold with monodromy $\psi$ and let $\theta(u,t)$ and $A(u,t)$ be the associated Teichm\"uller and Alexander polynomials, viewed as polynomials in one variable over $\bZ[H]$.
\begin{enumerate}
\item
If the monodromy $\psi$ is a homological pseudo-Anosov homeomorphism then $A(u,t)$ divides $\theta(u,t)$.
\item
For each irreducible character $\chi$ of $H$, the polynomial $A(u,\chi(t))$ is the characteristic polynomial of the action of $\psi$ on the twisted homology group $H_1(\Sigma_{g,n},\bC_{\chi})$.
\end{enumerate}
\end{prop}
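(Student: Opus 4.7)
The underlying strategy is to interpret both $\theta(u,t)$ and $A(u,t)$, regarded as polynomials in $u$ over $\bZ[H]$, as characteristic polynomials of $\psi$-equivariant $\bZ[H]$-linear endomorphisms of finitely generated modules, and then to exhibit a surjection from the module underlying $\theta$ onto the module underlying $A$.

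For part (2), I would start from the observation that since $M$ fibers with monodromy $\psi$, the composition $\pi_1(\Sigma) \hookrightarrow \pi_1(M) \twoheadrightarrow H$ has $\psi$-invariant image, so a lift $\tilde\psi$ of $\psi$ to the $H$-cover $\widetilde{\Sigma}\to\Sigma$ exists. The free $\bZ[H]$-module $H_1(\Sigma;\bZ[H]) \cong H_1(\widetilde{\Sigma};\bZ)$ then carries a $\bZ[H]$-linear action of $\tilde\psi_*$. Building the maximal abelian cover of $M$ in two steps (first the $H$-cover corresponding to the fiber direction, then the $\bZ$-cover corresponding to $u$), and running the Wang sequence for the latter, identifies the Alexander polynomial up to units with $\det(uI - \tilde\psi_* : H_1(\Sigma;\bZ[H]))$. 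Specializing $\chi \in \Hom(H,\bC^*)$ then yields the characteristic polynomial of $\psi$ on the finite-dimensional twisted homology $H_1(\Sigma;\bC_{\chi})$.

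For part (1), I would invoke McMullen's construction of $\theta(u,t)$. There is an invariant train track $\tau$ carrying the stable lamination of $\psi$, and the Teichm\"uller polynomial is, up to a unit of $\bZ[H]$, the characteristic polynomial in $u$ of the Perron--Frobenius transition matrix $P_\psi$ acting on the $\bZ[H]$-module $W(\tau;\bZ[H])$ of $H$-lifted edge weights. The homological hypothesis $q = \omega^2$ is exactly what permits the branches of $\tau$ to be assigned coherent orientations via the horizontal foliation, producing a $\bZ[H]$-linear, $\psi$-equivariant chain map $W(\tau;\bZ[H]) \to C_1(\Sigma;\bZ[H])$ whose induced map on $H_1$ is surjective because the stable lamination is filling. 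Given such a surjection intertwining the actions, the characteristic polynomial of the target divides that of the source, which combined with part (2) yields $A(u,t) \mid \theta(u,t)$.

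I expect the principal technical obstacle to arise in the punctured setting. The Teichm\"uller polynomial must be defined via a train track that records the prong behavior at singularities and at punctures correctly, and the corresponding carrying map $W(\tau;\bZ[H]) \to H_1(\Sigma;\bZ[H])$ must be set up so as not to introduce spurious cyclotomic factors such as $(u-1)$ coming from the punctures or the peripheral homology. Without orientability the assignment of branches to $1$-cycles suffers sign ambiguities around odd-order prongs, the carrying map is ill-defined, and the divisibility can genuinely fail; it is precisely the hypothesis $q = \omega^2$ that removes this obstruction.
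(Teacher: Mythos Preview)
The paper does not actually prove Proposition~\ref{p:alexteich}. It announces the result in the introduction as ``not particularly new,'' and later restates the two parts separately (as Propositions~\ref{p:1} and~\ref{p:2}) with references to McMullen's paper~\cite{ctm3} in lieu of proof. So there is no in-paper argument to compare against; your sketch is essentially a summary of McMullen's original proof, and at that level it is sound.

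Two small corrections are worth noting. First, in this paper (following McMullen) the Teichm\"uller polynomial is defined as the \emph{quotient}
\[
\theta(u,t)=\frac{\det(uI-P_E)}{\det(uI-P_V)},
\]
not simply as the characteristic polynomial of the edge transition matrix $P_E$ on the weight module. This does not damage your divisibility argument, since $\det(uI-P_V)$ contributes only cyclotomic factors and McMullen's argument accounts for them, but your statement ``the Teichm\"uller polynomial is, up to a unit, the characteristic polynomial of $P_\psi$'' is not literally what the paper adopts. Second, the step ``given a $\psi$-equivariant surjection of free $\bZ[H]$-modules, the characteristic polynomial of the target divides that of the source'' is delicate over a ring that is not a PID; in McMullen's treatment this is handled by working with fitting ideals (equivalently, the $\gcd$ of maximal minors) rather than bare determinants, which is why the Alexander polynomial is defined via the Alexander ideal. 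Your outline glosses over this, but the conclusion is correct.
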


Suppose that the action of $\psi$ on $H_1(\Sigma_{g,n},\bZ)$ has a nonzero fixed vector.  The fixed vectors of the $\psi$--action give an infinite family of finite abelian covers to which $\psi$ lifts and commutes with the deck group.  We will write $H$ for the fixed subgroup of $H_1(\Sigma_{g,n},\bZ)$ and we will write $h$ for its rank.  If $\Sigma_{h,k}\to\Sigma_{g,n}$ is such a cover with deck group $A$ then the complex homology of $\Sigma_{h,k}$ splits into eigenspaces according to the representations of $A$.  Furthermore, $\psi$ acts on each of these eigenspaces.  As $\Sigma_{h,k}$ varies over all finite abelian covers given by $H$, we can parametrize the eigenspaces of the deck group by torsion points on the torus $\bU^h\cong\Hom(H,S^1)$, where $\bU\cong S^1$ denotes the unit complex numbers.  We will usually not need to choose a basis for $H$ so that the isomorphism $\bU^h\cong\Hom(H,S^1)$ is not canonical.

We thus obtain a continuous function $\rho$ on $\Hom(H,S^1)$ which assigns to a point the spectral radius of the action of $\psi$ on the corresponding eigenspace, which is to say the largest eigenvalue of the action of $\psi$ on the homology group $H_1(\Sigma_{g,n},\bC_{\phi})$.

We will establish the following result as a corollary of Corollary \ref{c:teich}:

\begin{thm}\label{t:main}
Let $N$ be an arbitrary open neighborhood of the identity in $\Hom(H,S^1)$.  Then there is a $\delta=\delta(N)>0$ such that $\rho(\phi)\leq K-\delta$ for each $\phi\in\Hom(H,S^1)\setminus N$.
\end{thm}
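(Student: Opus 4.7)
My plan is to reduce the theorem to the pointwise inequality $\rho(\phi) < K$ for every non-trivial $\phi \in \Hom(H,S^1)$, and then to extract the uniform gap $\delta$ via continuity of $\rho$ together with compactness of $\Hom(H,S^1)\setminus N$.

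For a non-torsion character $\phi$, the pointwise bound follows almost immediately from the setup. By Proposition \ref{p:alexteich}(2), $\rho(\phi)$ is the largest modulus of a root of $A(u,\phi(t))$; by part (1), $A(u,t)$ divides the Teichm\"uller polynomial $\theta(u,t)$, so every such root is also a root of $\theta(u,\phi(t))$. Corollary \ref{c:teich} then gives that the largest modulus of a root of $\theta(u,\phi(t))$ is strictly less than $K$, and the same bound passes to $\rho(\phi)$.

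The torsion case is the main obstacle, as Proposition \ref{p:pf} and hence Corollary \ref{c:teich} genuinely exclude torsion characters. I would handle it by working on the corresponding finite cover directly. A non-trivial torsion $\phi$ factors as $H \twoheadrightarrow H/\ker\phi \hookrightarrow S^1$, and the hypothesis that $H$ is $\psi$-fixed produces a finite abelian cover $\pi:\widetilde{\Sigma}\to\Sigma_{g,n}$ to which $\psi$ lifts to a homeomorphism $\widetilde{\psi}$ commuting with the deck action. Since $q = \omega^2$, one has $\pi^* q = (\pi^*\omega)^2$, so $\widetilde{\psi}$ is again a homological pseudo-Anosov with canonical one-form $\pi^*\omega$ and the same dilatation $K$. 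Applying the simplicity result of \cite{KS,LT} on $\widetilde{\Sigma}$, the dilatation $K$ is a simple eigenvalue of largest modulus for $\widetilde{\psi}^*$ on $H^1(\widetilde{\Sigma},\bC)$, with eigenline spanned by the deck-invariant class $\pi^*\omega$. Because this eigenline lies in the trivial isotypic component of the deck-group decomposition, on every other isotypic component --- and in particular on the $\phi$-component, which is canonically identified with $H^1(\Sigma_{g,n},\bC_\phi)$ --- the operator $\widetilde{\psi}^*$ has spectral radius strictly less than $K$. Duality between $H^1$ and $H_1$ then yields $\rho(\phi) < K$.

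Finally, I would observe that $\rho$ is continuous on $\Hom(H,S^1)\setminus\{\phi_0\}$: the coefficients of $A(u,\phi(t))$ depend continuously on $\phi$, and away from the trivial character the degree of the characteristic polynomial (i.e., the twisted Betti number) is locally constant, so the largest modulus of a root varies continuously. Since $N$ contains $\phi_0$, the set $\Hom(H,S^1)\setminus N$ is a compact subset on which $\rho$ is continuous; hence $\rho$ attains its supremum there, and by the two previous paragraphs this supremum is strictly less than $K$. Taking $\delta$ to be the resulting gap finishes the proof.
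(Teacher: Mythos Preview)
Your proposal is correct and follows essentially the same route as the paper: split into non-torsion characters (handled via Corollary~\ref{c:teich} together with the divisibility $A\mid\theta$ from Proposition~\ref{p:alexteich}) and non-trivial torsion characters (handled by passing to the associated finite cover and invoking the simplicity of the dilatation eigenvalue from \cite{KS,LT}), then conclude by continuity of $\rho$ and compactness. Your write-up in fact supplies more detail than the paper does in the torsion case (explaining why $\pi^*\omega$ lies in the trivial isotypic component) and for continuity, but the architecture of the argument is the same.
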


Note that the continuity of $\rho$ implies that $\rho(\phi)$ will tend to $K$ as $\phi$ tends to the identity.  This last observation should be contrasted with McMullen's result in \cite{ctm1}.  There, he proves that if $\psi$ is a non--homological pseudo-Anosov which remains non--homological on every finite cover of $\Sigma_{g,n}$, then the largest eigenvalue of the action of $\psi$ on $H_1(\Sigma_{h,k},\bC)$ is bounded away from $K$ as $\Sigma_{h,k}$ varies over all finite covers of $\Sigma_{g,n}$.

To see the contrast more clearly, suppose that $\psi$ is a non--homological pseudo-Anosov homeomorphism with nonzero invariant cohomology on a surface $\Sigma$, and suppose that $\psi$ lifts to a homological pseudo-Anosov homeomorphism (which we also call $\psi$) on a double branched cover $\Sigma'$ of $\Sigma$.  Theorem \ref{t:main} implies that there are $\psi$--modules occurring in the homology of finite abelian covers of $\Sigma'$ which are never found as submodules of the homology of any finite cover of $\Sigma$.

The continuity of the function $\rho$ in Theorem \ref{t:main}, combined with Proposition \ref{p:alexteich} has the following immediate consequence.  Let $\psi\in\Mod_{g,n}$ be a homological pseudo-Anosov with nonzero invariant cohomology and let $\Sigma_{h,k}\to\Sigma_{g,n}$ be an abelian cover to which $\psi$ lifts.  Write $\gamma$ for the absolute value of the difference between the absolute values of the top eigenvalue (the dilatation) of $\psi$ on $H_1(\Sigma_{h,k},\bC)$ and the second largest eigenvalue of $\psi$ on $H_1(\Sigma_{h,k},\bC)$.

\begin{cor}\label{c:gap}
For each $\eps>0$ there exists a finite, abelian cover $\Sigma_{h,k}\to\Sigma_{g,n}$ to which $\psi$ lifts and for which $\gamma<\eps$.
\end{cor}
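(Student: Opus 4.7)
The approach is to combine continuity of the function $\rho$ at the identity with density of torsion points in the torus $\Hom(H,S^1)\cong\bU^h$.

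First I would observe that $\rho(\phi_0)=K$: by part (2) of Proposition \ref{p:alexteich} the spectrum of $\psi$ on $H_1(\Sigma_{g,n},\bC_{\phi_0})=H_1(\Sigma_{g,n},\bC)$ is the root set of $A(u,\phi_0(t))$, and the homological pseudo-Anosov hypothesis places the dilatation $K$ in that set as the top eigenvalue. The continuity of $\rho$ on $\Hom(H,S^1)$, asserted in the setup preceding Theorem \ref{t:main}, then produces, given $\eps>0$, an open neighborhood $N$ of $\phi_0$ throughout which $\rho(\phi)>K-\eps$.

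Next, I would pick a torsion character $\phi\in N$ with $\phi\neq\phi_0$, available because torsion points are dense in $\bU^h$. The torsion $\phi$ factors through a finite abelian quotient of $H$, and by the construction set up in the introduction it corresponds to a finite abelian cover $\Sigma_{h,k}\to\Sigma_{g,n}$ to which $\psi$ lifts and commutes with the deck group $A$. Decomposing
\[
H_1(\Sigma_{h,k},\bC)\;=\;\bigoplus_{\chi\in\widehat{A}}H_1(\Sigma_{g,n},\bC_\chi)
\]
as a $\psi$-invariant direct sum, the $\phi_0$-summand contributes the eigenvalue $K$, while the $\phi$-summand, genuinely distinct since $\phi\neq\phi_0$, contributes an eigenvalue of modulus $\rho(\phi)>K-\eps$. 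Hence $\psi$ on $H_1(\Sigma_{h,k},\bC)$ has at least two eigenvalues (counted with algebraic multiplicity) of modulus at least $K-\eps$, forcing the second-largest modulus in its spectrum into $[K-\eps,K]$, and thus $\gamma<\eps$.

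Beyond invoking the abelian cover construction already developed in the introduction, no step is a serious obstacle; the argument is essentially a clean packaging of continuity of $\rho$ together with density of torsion characters. The one item worth double-checking is that the $\phi_0$ and $\phi$ eigenspaces really contribute two distinct entries to the spectrum of the lifted $\psi$, but this is automatic: the two summands intersect trivially as subspaces, so each contributes its own eigenvalues with full algebraic multiplicity.
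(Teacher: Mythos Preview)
Your argument is correct and follows essentially the same approach as the paper: the paper's one-sentence proof appeals to continuity of the largest root of the Alexander polynomial at $\phi_0$ (where it equals $K$), which is exactly your continuity of $\rho$, and leaves the passage to a nearby torsion point implicit. You have simply made explicit the density-of-torsion-points step and the eigenspace decomposition that the paper suppresses.
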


In other words, as we consider larger and larger abelian covers of the base, new homological eigenvalues appear which get as close to the dilatation as we like.

\section{Acknowledgements}
The author thanks A. Eskin, B. Farb, C. McMullen and A. Zorich for various helpful discussions.

\section{Train tracks and the Teichm\"uller polynomial}
Let $\psi\in\Mod_{g,n}$ be a pseudo-Anosov homeomorphism which fixes a nontrivial integral cohomology class.  As is standard, the suspended $3$--manifold $M_{\psi}$ is hyperbolic and admits infinitely many non--equivalent fibrations which are parametrized by the Thurston unit norm ball (see \cite{FLP}, \cite{ctm2} and \cite{T}).  The Teichm\"uller polynomial was developed in \cite{ctm3} by McMullen in order to study fibered faces of the Thurston unit norm ball.  In this section we will recall some of the background on the Teichm\"uller polynomial.

Recall that a {\bf train track} is a branched, compact $1$--submanifold of $\Sigma_{g,n}$.  We can think of a train track $\tau$ as an embedded graph such that there is a smooth continuation of each path entering a vertex.  Train tracks are required to have global topological properties.  Indeed, the complement $\Sigma_{g,n}\setminus\tau$ is required to be a union of (cusped) subsurfaces of $\Sigma_{g,n}$ such that the double of each component has negative Euler characteristic.  In particular, smooth annuli, monogons and bigons are disallowed as complementary regions.  A {\bf transverse measure} on a train track is an assignment of a nonnegative weight to each edge of $\tau$ in such a way that the sum of the weights entering a vertex is equal to the sum of the weights exiting the vertex.  See \cite{PH} for general background on train tracks.

Train tracks are finite combinatorial objects used to study measured foliations on surfaces.  To each measured foliation, one can associate a measured train track, and conversely one can recover a measured foliation from a measured train track.  Both of these associations are up to some standard notion of equivalence which we will not discuss here but can be found in \cite{PH}.

A pseudo-Anosov homeomorphism $\psi$ has two associated measured foliations, and as such can be canonically assigned two measured train tracks, $\tau$ and $\tau^*$, corresponding to the expanding and contracting foliation respectively.  We will restrict our attention to $\tau$.  It is standard to call $\tau$ an {\bf invariant train track}, since there is an isotopy of $\Sigma_{g,n}$ which sends $\psi(\tau)$ onto $\tau$ in a way which sends vertices to vertices.  We will fix such an isotopy, thus obtaining a well-defined map $P_E$ from the free abelian group on the edges to itself.  The map $P_E$ is defined by taking an edge $e$ of $\tau$ and recording which edges of $\psi(\tau)$ hit $e$, with multiplicity.  It is a standard result that $P_E$ is a Perron--Frobenius matrix, which is to say that for each sufficiently large $n$, each entry of $P_E^n$ is a positive integer.

It is a standard fact that the measure on $\tau$ is positive on each edge and that $\tau$ {\bf fills} $\Sigma_{g,n}$, in the sense that each complementary region of $\tau$ is topologically a (possibly punctured) disk (see \cite{FLP} and \cite{PH}).  This fact gives us the following simple but important observation:

\begin{lemma}
Let $\tau$ be an invariant train track for a pseudo-Anosov homeomorphism.  Then the edges of $\tau$ form the $1$--cells of a CW structure on $\Sigma_{g,n}$.
\end{lemma}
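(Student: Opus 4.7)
The plan is to exhibit an explicit CW decomposition of $\Sigma_{g,n}$ whose $0$-skeleton consists of the switches of $\tau$, whose $1$-cells are the edges of $\tau$ (together with finitely many auxiliary edges added if necessary for the punctures), and whose $2$-cells come from the complementary regions of $\tau$.

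First I would take the switches of $\tau$ as the $0$-skeleton $X^{(0)}$. By the definition of a train track, each edge of $\tau$ is an embedded arc whose endpoints are switches and whose interior is disjoint from $X^{(0)}$, so the edges attach to $X^{(0)}$ in the standard way to produce a finite $1$-complex $X^{(1)}$ whose underlying space is $\tau$ itself. Next I would use the filling hypothesis to build the $2$-skeleton: since $\tau$ fills $\Sigma_{g,n}$, each component $R$ of $\Sigma_{g,n}\setminus\tau$ is a (possibly punctured) open disk. For an unpunctured $R$, the closure of $R$ in $\Sigma_{g,n}$ is a closed disk whose boundary traces out a word in the edges of $\tau$, and the usual characteristic map $D^2\to\overline{R}$ yields a $2$-cell attached to $X^{(1)}$. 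For a component containing punctures I would add an auxiliary $0$-cell at each puncture and a slit from the puncture to a switch on $\partial R$, which converts the punctured disk into an ordinary closed disk without disturbing the status of the edges of $\tau$ as $1$-cells.

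Once the cells are assembled it remains to verify the standard closure-finiteness and weak-topology axioms, both of which are automatic because $\tau$ has only finitely many switches, edges, and complementary regions, and the interiors of the $2$-cells exhaust $\Sigma_{g,n}\setminus\tau$. I expect the only subtlety to be the treatment of punctured complementary regions, where one must slit carefully so as not to alter the combinatorics of $\tau$ itself; but this is a purely local modification requiring only finitely many auxiliary cells, so the conclusion that the edges of $\tau$ form $1$-cells of a CW structure on $\Sigma_{g,n}$ follows without further work.
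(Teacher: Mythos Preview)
Your treatment of the unpunctured complementary regions matches the paper's exactly: they become the $2$-cells. The divergence is in how the punctured regions are handled. The paper simply observes that a once-punctured disk deformation retracts onto its boundary, so no $2$-cell is attached there at all; the resulting CW complex is $\tau$ together with the unpunctured $2$-cells, a deformation retract of $\Sigma_{g,n}$ (so the lemma is really producing a CW model of $\Sigma_{g,n}$ up to homotopy, which is all that is needed for the homology computations later). This has the side benefit that the $1$-cells are \emph{exactly} the edges of $\tau$, with no auxiliary edges.

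Your slitting construction, by contrast, tries to put a genuine CW structure on $\Sigma_{g,n}$ itself, and there is a snag: a puncture is a point \emph{removed} from the surface, so you cannot place a $0$-cell there, and the slit you describe is then a half-open arc with no $0$-cell at one end---not a legitimate $1$-cell. One can repair this by instead adding a small loop encircling each puncture together with an arc from that loop to $\partial R$, converting the region into an annulus that carries an honest $2$-cell; but at that point you have introduced several auxiliary cells and the argument is more elaborate than the paper's one-line retraction.
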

\begin{proof}
This is a general fact about filling train tracks.  If a complementary region is not punctured, it can be used as a $2$--cell.  If a complementary region is punctured, it can be deformation retracted onto its boundary.
\end{proof}

Let $M=M_{\psi}$ be the suspended $3$--manifold of $\Sigma_{g,n}$.  Since $\psi$ has invariant homology, the rank of $H_1(M,\bZ)$ is at least two.  We obtain an infinite abelian cover $S$ of $\Sigma_{g,n}$ by restricting the map \[\pi_1(M)\to H_1(M,\bQ)\] to $\pi_1(\Sigma_{g,n})$.  Write $H$ for the corresponding deck group.  We may lift both $\tau$ and $\psi$ to $S$.  Notice that $\psi$ acts trivially on $H$, so that $\psi$ commutes with the deck group and we can lift the carrying map $\psi(\tau)\to\tau$ in a $\psi$--equivariant way.

The edges and vertices of the total lift of $\tau$ are just edges and vertices of $\tau$ labelled by elements of $H$.  The total lift of $\tau$ still fills $S$.  The carrying map gives us a well--defined map $P_E$ of the free $\bZ[H]$--module on the edges, and it furnishes a matrix with coefficients in $\bZ[H]$.  We can construct a similar map $P_V$ for the vertices, and the ratio of the two characteristic polynomials is the {\bf Teichm\"uller polynomial} of the fibered face determined by $\psi$.  We denote the Teichm\"uller polynomial by $\theta$.  If $\phi:H\to\bC$ is any homomorphism, we obtain a polynomial in one variable with complex coefficients, which we call a {\bf specialization} of $\theta$ and write as either $\theta_{\phi}$ or as $\theta(u,\phi(t))$.

We now recall some facts about the Teichm\"uller polynomial which can all be found in \cite{ctm3}.  The Teichm\"uller polynomial is usually specialized at real cohomology classes (i.e. homomorphisms $\phi:H\to\bR$) in the fibered face.  This way, we obtain real polynomials.  Each rational point in the fibered face corresponds to another fibration of $M$.  The largest root of the Teichm\"uller polynomial at such a rational point (suitably rescaled) is the dilatation of the monodromy.  The function which assigns to each point on the fibered face the largest root of the specialization of $\theta$ is a concave analytic function which tends to infinity on the boundary of the fibered face.

Note that if $\phi$ is an integral cohomology class which arises from a homomorphism $H\to\bZ$, we get an infinite cyclic cover of $\Sigma_{g,n}$ and an associated specialization $\theta_{\phi}$ of the Teichm\"uller polynomial, which is now a polynomial over the ring of Laurent polynomials in one variable.  Since the dilatation blows up at the boundary of the fibered face, it follows that there is no unit $u$ such that the coefficients of $u\cdot \theta_{\phi}$ are all constant Laurent polynomials.  This observation has the following immediate consequence:

\begin{lemma}\label{l:spread1}
Choose any basis $\{t_1,\ldots,t_h\}$ for $H$ and let $u$ be any unit in $\bZ[H]$.  Then for each $i$, the coefficients of $u\cdot \theta$ are not all independent of $t_i$.
\end{lemma}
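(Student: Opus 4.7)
The plan is to argue by contradiction, reducing to the principle stated in the paragraph immediately preceding the lemma: for any nontrivial homomorphism $\phi: H \to \bZ$, there is no unit $v \in \bZ[s, s^{-1}]$ such that all coefficients of $v \cdot \theta_{\phi}$ (viewed as a polynomial in the fibration variable) are constant Laurent polynomials in $s$.

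Suppose for contradiction that for some index $i$ and some unit $u \in \bZ[H]$, every coefficient of $u \cdot \theta$ lies in the subring of $\bZ[H]$ generated by the $t_j^{\pm 1}$ with $j \neq i$. Define $\phi: H \to \bZ$ by $\phi(t_i) = 1$ and $\phi(t_j) = 0$ for $j \neq i$; the induced ring homomorphism $\bZ[H] \to \bZ[s, s^{-1}]$ sends $t_i \mapsto s$, sends $t_j \mapsto 1$ for $j \neq i$, and carries the unit $u$ to a unit $\phi(u) \in \bZ[s, s^{-1}]$.

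Now apply $\phi$ coefficient-wise to $u \cdot \theta$. Since each coefficient is, by assumption, a Laurent polynomial in the $t_j$ with $j \neq i$ only, $\phi$ sends each such coefficient to an integer. Hence $\phi(u \cdot \theta) = \phi(u) \cdot \theta_{\phi}$ has only constant Laurent polynomial coefficients in $s$, and taking $v = \phi(u)$ yields a direct contradiction with the principle.

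The only substantive point to check is that the cited principle does apply to our chosen $\phi$, which it does because $\phi$ is a nonzero homomorphism $H \to \bZ$, and this is all that the principle requires (if needed, one can justify this geometrically by observing that the fibered cone is open and contains the original fibration direction, so the ray $\{(N, \phi) : N > 0\}$ enters the cone for large $N$ and exits it at some finite $N_0 > 0$, at which point the dilatation blows up). I do not anticipate any genuine obstacle: the lemma is essentially the natural extension of the single-variable case to several variables, obtained by specializing all but one coordinate to $1$.
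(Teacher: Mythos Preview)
Your proposal is correct and follows precisely the approach the paper intends: the paper states the lemma as an ``immediate consequence'' of the preceding observation (that for any homomorphism $\phi:H\to\bZ$ no unit makes the coefficients of $\theta_\phi$ all constant), and your argument is exactly the spelled-out version of that deduction---specialize via $t_i\mapsto s$, $t_j\mapsto 1$ for $j\neq i$, and note that units map to units. There is nothing to add.
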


We finally make a few observations about homological pseudo-Anosov homeomorphisms.  If $\psi$ is homological then $\tau$ admits an orientation.  By this, we mean that we can orient each edge of $\tau$ in such a way that every smooth path in $\tau$ has a coherent orientation.  Furthermore, the carrying map $\psi(\tau)\to\tau$ is orientation preserving.  It follows that $P_E$ not only records the edges which hit a particular edge $e$ of $\tau$ with multiplicity, but with signed multiplicity.  The oriented edges of $\tau$ now provide a basis for $C_1(\Sigma_{g,n},\bZ)$, the one--chains with respect to some cell decomposition of $\Sigma_{g,n}$, and the adjoint of $P_E$ is the map $\psi_*$ induced on $C_1(\Sigma_{g,n},\bZ)$ by $\psi$.  On the cover $S$ of $\Sigma_{g,n}$, the orientation of $\tau$ lifts, as does $\psi$, and $P_E$ is replaced by the corresponding matrix of Laurent polynomials.  This is the fundamental connection between train track, pseudo-Anosov homeomorphisms, and homology.

Now suppose that $\Sigma_{h,k}\to\Sigma_{g,n}$ is a finite abelian cover with deck group $A$, and suppose $\psi$ lifts to this cover.  The $1$--chains $C_1(\Sigma_{h,k},\bC)$ form a module with commuting actions of $\psi$ and $A$.  The $A$--action decomposes $C_1(\Sigma_{h,k},\bC)$ into eigenspaces, and $\psi$ preserves these eigenspaces.  The action of $\psi$ on this eigenspace is described by $P_E$, except that the Laurent polynomial indeterminates now act by roots of unity according to particular representation of $A$.

Since we can build finite abelian covers by writing down homomorphisms from $H$ to various finite groups, it becomes clear that the action of $\psi$ on train tracks on various finite abelian covers of $\Sigma_{g,h}$ is determined by finite image homomorphisms $\phi:H\to S^1$.  The action of $\psi$ on one--chains is obtained by specializing $P_E$ at $\phi$.

Write $p_e$ for the characteristic polynomial of $P_E$.  Notice that taking the largest root of $p_e$ yields a continuous function of the coefficients of $p_e$.  We thus obtain a continuous function $\rho$ on $\Hom(H,S^1)\cong\bU^h$ which assigns to a point $z$ the modulus of the largest eigenvalue of the specialization $\theta_z$ of the Teichm\"uller polynomial.  Note that we lose no information by specializing $\theta$ as opposed to $p_e$, since the roots of the characteristic polynomial of $P_V$ are all roots of unity and morally we are only interested in eigenvalues which lie off the unit circle.

\section{Twisted homology, Alexander polynomials, representations and covers}
Let $X$ be a CW complex with nontrivial fundamental group $\pi$.  Recall that one can consider twisted homology of $X$.  Indeed, let $\rho:\pi\to GL(V)$ be a representation of $\pi$.  One can consider the twisted homology of $X$ with coefficients in $V$, written $H_*(X,V_{\rho})$.  Twisted homology is related to covers in an essential way.  Let $\widetilde{X}$ be the cover of $X$ corresponding to the kernel of $\rho$, equipped with a lifted CW structure.  One first considers the complex $C_*(\widetilde{X},\bZ)$.  This complex is equipped with a natural action of the group ring $\bZ[\pi]$.  We define the complex \[C_*(X,V_{\rho})=C_*(\widetilde{X},\bZ)\otimes_{\bZ[\pi]} V.\]  The twisted homology $H_*(X,V_{\rho})$ is just the homology of this complex.

We are primarily interested in $H_1(\Sigma_{g,n},V_{\chi})$, where $\Sigma_{g,n}$ is a surface and $\chi$ is a finite, one--dimensional representation of $\pi_1(\Sigma_{g,n})$.  Any such representation factors through a map \[\chi:H_1(\Sigma_{g,n},\bZ)\to\bC^*\] whose image is finite.  Write $\Sigma_{h,k}$ for the covering space corresponding to the kernel of $\chi$.  The twisted homology group is then $H_1(\Sigma_{g,n},\bC_{\chi})$ and can be identified with the subspace of $H_1(\Sigma_{h,k},\bC)$ where the action of $\pi_1(\Sigma_{g,n})$ is given by $\chi$, i.e. the $\chi$--eigenspace of $H_1(\Sigma_{h,k},\bC)$.

\subsection{The Teichm\"uller polynomial and the Alexander polynomial}
We now make some further remarks about the relationship between train tracks and homology via the Alexander polynomial (see \cite{ctm4} for more details).  The Alexander polynomial of a $3$--manifold $M$ is an element of $\bZ[H_1(M,\bZ)/(torsion)]$.  One way to define the Alexander polynomial $A$ is to consider the cover $M'\to M$ corresponding to $H_1(M,\bZ)/(torsion)$.  Let $p$ be a basepoint of $M$ and $p'\subset M'$ its preimage.  The {\bf Alexander module} of $M$ is the $\bZ[H_1(M,\bZ)/(torsion)]$--module $H_1(M',p',\bZ)$.  The {\bf Alexander ideal} is the first elementary ideal of the Alexander module, and the {\bf Alexander polynomial} is the greatest common divisor of the elements in the elements in the Alexander ideal.

Returning to the situation at hand, after choosing a fibration $M\to S_1$, we can write \[H_1(M,\bZ)/(torsion)\cong\bZ\oplus H.\]  Thus, the Alexander polynomial can be written as a polynomial $A(u,t)$, where $t\in H$.  The relationship between cohomology of finite abelian covers and Alexander polynomials is given by the following result in \cite{ctm4}:

\begin{prop}
An Alexander polynomial in more than one variable defines the maximal hypersurface in the character variety such that $\dim H^1(M,\bC_{\chi})>0$ whenever $A(\chi)=0$.
\end{prop}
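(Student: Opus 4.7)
The plan is to translate the dimension condition $\dim H^1(M,\bC_\chi)>0$ into a support condition on the Alexander module, and then to extract the maximal hypersurface component of that support from the fact that $A$ is the gcd of the $0$-th Fitting ideal. Set $R=\bZ[H_1(M,\bZ)/(\mathrm{torsion})]$, and for a character $\chi\colon H_1(M,\bZ)\to\bC^*$ let $\mathfrak{m}_\chi\subset R$ denote its kernel, so that $\bC_\chi=R/\mathfrak{m}_\chi$.

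First I would compute $H^1(M,\bC_\chi)$ as the cohomology of the complex $\Hom_R(C_*(M',\bZ),\bC_\chi)$, where $M'\to M$ is the maximal torsion-free abelian cover, and identify its degree-one part with a specialization of the Alexander module $\mathcal{A}=H_1(M',p',\bZ)$. A universal coefficients argument, combined with Poincar\'e--Lefschetz duality on the $3$-manifold $M$, shows that $\dim H^1(M,\bC_\chi)>0$ precisely when $\chi$ lies in the support of $\mathcal{A}$, apart from an exceptional contribution at the trivial character coming from the relative term $H_*(p',\bZ)$.

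Next I would invoke the theory of elementary (Fitting) ideals: the support of a finitely generated $R$-module $\mathcal{A}$ equals the vanishing locus $V(E_0)$ of its $0$-th Fitting ideal. Writing generators $E_0=(f_1,\ldots,f_k)$ and $A=\gcd(f_1,\ldots,f_k)$, we have $f_i=A\cdot g_i$ with $\gcd(g_1,\ldots,g_k)=1$. Since $R$ is a Laurent polynomial ring, hence a UFD, the subvariety $V(g_1,\ldots,g_k)$ has codimension at least two inside the character variety $\Hom(H,\bC^*)$. Consequently $V(E_0)=V(A)\cup V(g_1,\ldots,g_k)$, exhibiting $V(A)$ as the unique maximal hypersurface contained in the jumping locus. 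The hypothesis that $A$ depends on more than one variable is precisely what guarantees that $V(A)$ is a proper nontrivial hypersurface in the character variety, so that the statement is nonvacuous.

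The main obstacle is carrying out the duality step cleanly. One needs to keep track of the distinction between the absolute and relative Alexander modules at the basepoint $p'$, to use Poincar\'e duality on $M$ to pass between $H^1(M,\bC_\chi)$ and the appropriate torsion submodule of the Alexander module, and to verify that the higher Fitting ideals (which detect higher-codimension pieces of the support where the module fails to be cyclically presented) do not enlarge the hypersurface part of the jumping locus beyond $V(A)$. Once these are in place, the gcd identification from the previous paragraph delivers the conclusion.
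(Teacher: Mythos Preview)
The paper does not prove this proposition at all: it is quoted as a result from McMullen's paper \cite{ctm4}, so there is no in-paper argument to compare against. Your outline is in fact a fair summary of McMullen's own proof---identify the jumping locus $\{\chi:\dim H^1(M,\bC_\chi)>0\}$ with the vanishing locus of the $0$-th Fitting ideal $E_0$ of the Alexander module, then use that $R=\bZ[H]$ is a UFD to write $E_0=(A)\cdot(g_1,\ldots,g_k)$ with $\gcd(g_i)=1$, so that $V(E_0)=V(A)\cup V(g_1,\ldots,g_k)$ with the second piece of codimension $\geq 2$.

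One remark on your sketch: the invocation of Poincar\'e--Lefschetz duality for the step ``$\dim H^1(M,\bC_\chi)>0 \iff \chi\in\operatorname{Supp}\mathcal{A}$'' is unnecessary, and in McMullen's treatment this step holds for an arbitrary finite CW complex, not just a $3$-manifold. The point is purely group-cohomological: for $\chi$ nontrivial, $H^1(M,\bC_\chi)\cong H^1(\pi_1 M,\bC_\chi)$ is the space of crossed homomorphisms modulo principal ones, which is computed by the Fox Jacobian (a presentation matrix for the relative Alexander module) specialized at $\chi$. The condition that this specialization drop rank is exactly $\chi\in V(E_0)$. Duality plays a role elsewhere in \cite{ctm4} (for the symmetry $A(t)\doteq A(t^{-1})$), but not here. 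Apart from that, your plan is sound and matches the cited source.
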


The relationship between Alexander polynomials and the action of $\psi$ on the homology groups of finite covers is the following observation, which is well--known and appears in \cite{ctm3}, for instance:
\begin{prop}\label{p:1}
Let $\chi:H\to S^1$ be a character of $H$.  Then the polynomial $A(u,\phi(t))$ is the characteristic polynomial of the action of $\psi$ on $H_1(\Sigma,\bC_{\chi})$.
\end{prop}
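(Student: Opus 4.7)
The plan is to identify the Alexander module of $M$ concretely as $H_1$ of an intermediate abelian cover, viewed with its natural $\bZ[H][u,u^{-1}]$-module structure in which $u$ acts as the lifted monodromy. Once this identification is in hand, specializing at $\chi$ turns the characteristic polynomial of $u$ acting on this module into the characteristic polynomial of $\psi$ acting on the twisted homology $H_1(\Sigma,\bC_{\chi})$.

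First I would unravel the covers. Using the decomposition $H_1(M,\bZ)/(\text{torsion})\cong \bZ\oplus H$ coming from the chosen fibration, projecting $\bZ\oplus H\to H$ yields an intermediate cover $M^H\to M$ which is still fibered over $S^1$, now with fiber the $H$-cover $\tilde\Sigma\to\Sigma$ and monodromy a distinguished lift $\tilde\psi$ of $\psi$ (well defined because $\psi$ acts trivially on $H$). The maximal free abelian cover $M'$ then factors as $M'\to M^H\to M$, and as a space $M'\cong \tilde\Sigma\times\bR$, with the covering $\bZ$ acting diagonally by translation of $\bR$ and by $\tilde\psi$ on $\tilde\Sigma$, while $H$ acts by deck transformations on $\tilde\Sigma$.

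Second I would compute the Alexander module from this description. One has $H_1(M',\bZ)\cong H_1(\tilde\Sigma,\bZ)$, a finitely generated $\bZ[H]$-module equipped with the further $\bZ[u^{\pm 1}]$-action via $\tilde\psi_*$. Passing to the relative version $H_1(M',p',\bZ)$ only adjusts this by a cyclic summand contributing a unit factor $u-1$ (the standard basepoint correction for an infinite cyclic cover). Because $M$ is fibered, the Wang sequence for $\tilde\Sigma\to M^H\to S^1$ furnishes a square presentation of the Alexander module of the form $uI-\tilde\psi_*$ with entries in $\bZ[H]$, so its first elementary ideal is principal, generated by $\det(uI-\tilde\psi_*)$. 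Hence, up to units,
\[
A(u,t)=\det\bigl(uI-\tilde\psi_*\bigr)\in\bZ[H][u].
\]
Finally I would specialize at $\chi$. Because $\chi$ has finite image, the discussion in the preceding subsection identifies $H_1(\tilde\Sigma,\bZ)\otimes_{\bZ[H]}\bC_{\chi}$ with $H_1(\Sigma,\bC_{\chi})$, and under this base change $\tilde\psi_*$ descends to the operator induced by $\psi$. Since formation of the characteristic polynomial commutes with base change, one obtains
\[
A(u,\chi(t))=\det\bigl(uI-\psi_*|_{H_1(\Sigma,\bC_{\chi})}\bigr),
\]
which is the desired statement.

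The main obstacle I expect is justifying that the Alexander module really admits a square presentation whose determinant is, up to units, exactly the characteristic polynomial of $\tilde\psi_*$, rather than some proper divisor or multiple. This is a module-theoretic claim about the first elementary ideal that relies crucially on the fibered hypothesis, via the Wang sequence. A secondary issue is that specializations at finite characters pass through zero divisors of $\bZ[H]$; this is handled by observing that the isotypic decomposition $H_1(\tilde\Sigma,\bZ)\otimes\bC=\bigoplus_{\eta}H_1(\Sigma,\bC_{\eta})$ is $\tilde\psi_*$-equivariant, so the determinant factors compatibly and specialization at $\chi$ picks out precisely the $\chi$-factor.
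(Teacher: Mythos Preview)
The paper does not actually prove this proposition; it records it as a well--known observation and refers the reader to McMullen \cite{ctm3}. So there is no in--paper argument to compare against, and your task is really to supply what the paper omits. Your outline is the standard one for fibered manifolds and is correct in spirit: identify $M'\simeq\tilde\Sigma$, recognize the $u$--action as $\tilde\psi_*$, and specialize.

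The one genuine gap is the sentence ``the Wang sequence \ldots\ furnishes a square presentation of the Alexander module of the form $uI-\tilde\psi_*$.'' For that determinant to make sense you need $H_1(\tilde\Sigma,\bZ)$ to be a \emph{free} $\bZ[H]$--module, and once $\rk H\ge 2$ this can fail: already for $\Sigma$ a wedge of three circles with $H\cong\bZ^3$ the module $H_1(\tilde\Sigma)$ is the first Koszul syzygy module, which is not free. (The Wang sequence itself computes $H_*(M^H)$, not a presentation of $H_1(M')$, so invoking it here is also slightly off target.) The clean fix, available because $\Sigma$ has punctures, is to work with $H_1(\tilde\Sigma,\tilde q;\bZ)$ relative to a basepoint $H$--orbit $\tilde q$: using a one--vertex graph model for $\Sigma$ this is visibly free over $\bZ[H]$ of rank $r=\rk\pi_1(\Sigma)$, and the matrix of $\tilde\psi_*$ on it is exactly the Alexander matrix of the HNN presentation $\pi_1(M)=\langle x_1,\dots,x_r,u\mid ux_iu^{-1}=\psi(x_i)\rangle$, giving $A(u,t)=\det(uI-\tilde\psi_*)$ up to units. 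Your specialization step then goes through unchanged. Incidentally, your ``unit factor $u-1$'' for the basepoint correction is the one--variable statement; in the multivariable setting the discrepancy between $H_1(M')$ and $H_1(M',p')$ is the full augmentation ideal of $\bZ[\bZ\oplus H]$, but working relative to $\tilde q$ from the outset sidesteps this entirely.
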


The relationship between the Teichm\"uller polynomial for a fibration with monodromy $\psi$ and an orientable invariant foliation is given as follows (see \cite{ctm3} for a proof):

\begin{prop}\label{p:2}
The Alexander polynomial $A(u,t)$ divides the Teichm\"uller polynomial $\theta(u,t)$.
\end{prop}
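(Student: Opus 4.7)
The plan is to compute both polynomials from a single chain complex arising from the invariant train track $\tau$, and then to extract the divisibility via the Euler--Poincar\'e principle. By the earlier Lemma, $\tau$ equips $\Sigma_{g,n}$ with a CW structure whose $1$-skeleton is $\tau$ itself and whose $2$-cells are its (possibly punctured) complementary regions. Lifting this CW structure to the $H$-cover $\widetilde\Sigma$, and recalling that $\psi$ acts trivially on $H$ so that the lift is $\psi$-equivariant, we obtain a chain complex
$$0 \to C_2 \xrightarrow{\partial_2} C_1 \xrightarrow{\partial_1} C_0 \to 0$$
of free $\bZ[H]$-modules on which $\psi$ acts $\bZ[H]$-linearly. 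By the discussion in the previous section the action on $C_1$ is (the adjoint of) $P_E$ and the action on $C_0$ is (the adjoint of) $P_V$, so by construction $\theta(u,t) = \det(uI - \psi|C_1)/\det(uI - \psi|C_0)$.

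Next I would invoke the Euler--Poincar\'e principle. Passing to the fraction field of $\bZ[H]$ so that the homology groups become finite-dimensional vector spaces, the alternating product of characteristic polynomials of $\psi$ is the same across the chain complex as across its homology, giving
$$\frac{\det(uI-\psi|C_0)\,\det(uI-\psi|C_2)}{\det(uI-\psi|C_1)} \;=\; \frac{\det(uI-\psi|H_0)\,\det(uI-\psi|H_2)}{\det(uI-\psi|H_1)}.$$
Proposition \ref{p:1} identifies the denominator on the right with $A(u,t)$. The groups $H_0$ and $H_2$ are each either trivial or isomorphic to $\bZ$ with trivial $\psi$-action, so they contribute only factors of $(u-1)$ or $1$. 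Rearranging, one obtains an identity of the form $\theta(u,t)\cdot(u-1)^{k} = A(u,t)\cdot\det(uI-\psi|C_2)$ for some $k\in\{0,1,2\}$, a priori valid in $\mathrm{Frac}(\bZ[H])[u]$.

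The main obstacle is to upgrade this identity from the fraction field to $\bZ[H][u]$, because the homology modules $H_i$ need not be $\bZ[H]$-free and so ``characteristic polynomial of $\psi$ on $H_i$" requires care. I would handle this by reinterpreting characteristic polynomials through zeroth Fitting ideals of presentation matrices (the same device used in the excerpt to define the Alexander polynomial), and by exploiting multiplicativity of Fitting ideals across the short exact sequences
$$0 \to \ker\partial_i \to C_i \to \mathrm{im}\,\partial_i \to 0, \qquad 0 \to \mathrm{im}\,\partial_{i+1} \to \ker\partial_i \to H_i \to 0.$$
Combining these with the fact that $\theta(u,t)$ is already an honest element of $\bZ[H][u]$ by its definition, and that the factors $(u-1)^k$ are forced to cancel cleanly against $\det(uI-\psi|C_2)$ by tracing the elementary ideals through the sequences above, yields the desired divisibility $A(u,t)\mid \theta(u,t)$ in $\bZ[H][u]$.
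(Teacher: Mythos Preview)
The paper does not actually prove this proposition; it simply cites McMullen \cite{ctm3}. Your outline follows essentially the route taken there (see Theorem~7.1 of \cite{ctm3}): compute both polynomials from the CW structure furnished by the oriented invariant train track on the $H$--cover $S$ (your $\widetilde\Sigma$), and compare chain--level with homology--level characteristic polynomials via an Euler--Poincar\'e argument. So at the level of strategy your proposal matches the standard one.

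That said, several steps in your execution are either misstated or left as assertions. First, your appeal to Proposition~\ref{p:1} does not do what you want: that proposition concerns specializations at unitary characters $\chi:H\to S^1$, not the identification of $A(u,t)$ itself with $\det(uI-\psi_*\mid H_1)$ over $\bZ[H]$ or its fraction field. One needs a direct argument (via the Wang sequence for the mapping torus and the Fitting--ideal definition) that the Alexander polynomial of $M_\psi$ coincides with the characteristic polynomial of $\psi_*$ on $H_1(S,\bZ)$ as a $\bZ[H]$--module. Second, your treatment of $H_0$ and $H_2$ is off: $H_2(S,\bZ)=0$ since $S$ is an open surface, and $H_0(S,\bZ)\cong\bZ$ is a \emph{torsion} $\bZ[H]$--module (each $t_i-1$ kills it), so it vanishes over $\mathrm{Frac}(\bZ[H])$. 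There are thus no stray $(u-1)^k$ factors; the Euler--Poincar\'e identity over the fraction field already reads $\theta(u,t)=\det(uI-\psi\mid C_2)\cdot q_1(u)$. Third, and most seriously, the ``upgrade'' paragraph is where all the content lies and you have only asserted it. Multiplicativity of zeroth Fitting ideals in short exact sequences is not automatic without freeness hypotheses on the pieces, and your claim that the $(u-1)^k$ factors ``cancel cleanly'' against $\det(uI-\psi\mid C_2)$ is unsupported (and, per the previous remark, unnecessary). McMullen's argument avoids this by working directly with the presentation matrix for the Alexander module coming from the train--track CW structure, rather than by abstract Fitting--ideal manipulations.
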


On the other hand, the Teichm\"uller polynomial gives more information than just the action of $\psi$ on the homology of certain finite abelian covers of $\Sigma_{g,n}$.  Indeed, let $\psi$ act on the homology $H_1(\Sigma_{g,n},\bZ)$ with trace smaller than zero.  Then there is no basis for $H_1(\Sigma_{g,n},\bZ)$ with respect to which the action of $\psi$ is Perron--Frobenius.

\section{Perron--Frobenius matrices over Laurent polynomial rings}
In this short section, we prove a result about matrices with entries in Laurent polynomial rings.  If $p\in\bQ[t,t^{-1}]$, define the {\bf spread} of $p$ to be the absolute value of the difference between the highest and lowest exponent of $t$ occurring in $p$.
\begin{lemma}\label{l:spread}
Suppose $\{p_1,\ldots,p_k\}$ and $\{q_1,\ldots,q_k\}$ are two collections of nonzero rational Laurent polynomials with positive coefficients.  Then the spread of \[\sum_{i=1}^k p_iq_i\] is at least as large as the maximum of the spreads of the $\{p_i\}$ and the $\{q_i\}$.
\end{lemma}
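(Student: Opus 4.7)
The plan is to exploit the hypothesis that all of the $p_i$ and $q_i$ have positive coefficients, so that cancellation is impossible at every stage of the computation. Define $\deg^+(p)$ and $\deg^-(p)$ to be the highest and lowest exponents of $t$ occurring with nonzero coefficient in a Laurent polynomial $p$; then the spread of $p$ is $\deg^+(p)-\deg^-(p)$.

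First I would handle a single product $p_iq_i$. Writing $p_i=\sum_j a_{i,j}t^j$ and $q_i=\sum_j b_{i,j}t^j$ with $a_{i,j},b_{i,j}\geq 0$, the coefficient of $t^m$ in $p_iq_i$ is $\sum_{j+\ell=m}a_{i,j}b_{i,\ell}$, which is a sum of nonnegative terms. Hence the coefficient of $t^{\deg^+(p_i)+\deg^+(q_i)}$ in $p_iq_i$ is strictly positive (being $a_{i,\deg^+(p_i)}b_{i,\deg^+(q_i)}>0$), and similarly for the lowest exponent. This identifies $\deg^\pm(p_iq_i)=\deg^\pm(p_i)+\deg^\pm(q_i)$, so that the spread of $p_iq_i$ equals the sum of the spreads of $p_i$ and $q_i$, which is at least the maximum of the two.

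Next I would pass from $p_iq_i$ to the full sum $S=\sum_i p_iq_i$. Because every $p_iq_i$ has nonnegative coefficients, no cancellation can occur in forming $S$: the coefficient of $t^m$ in $S$ is the sum of the coefficients of $t^m$ in each $p_iq_i$, and in particular is strictly positive whenever it is positive in any single $p_iq_i$. Thus $\deg^+(S)\geq \deg^+(p_iq_i)$ and $\deg^-(S)\leq \deg^-(p_iq_i)$ for every $i$, from which the spread of $S$ is at least the spread of each $p_iq_i$. Combining with the previous paragraph and taking the maximum over $i$ gives the claim.

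There is no real obstacle here; the entire content of the argument is the fact that positivity rules out cancellation, both in each individual product and in the final sum. The only care required is to phrase things in terms of $\deg^\pm$ rather than in terms of the spread directly, so that the inequalities compose correctly at each step.
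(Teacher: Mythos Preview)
Your argument is correct and is essentially the same as the paper's proof, which simply observes that positivity of all coefficients prevents any cancellation so the spread can only increase. You have just made the ``no cancellation'' reasoning explicit via $\deg^{\pm}$, which is a fine elaboration of the paper's one-line proof.
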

\begin{proof}
Since each $p_i$ and $q_i$ is nonzero and has only positive coefficients, it is clear that no cancellation can occur, so that the spread can only increase.
\end{proof}

\begin{lemma}
Let $M\in M_n(\bQ[t,t^{-1}])$ be Perron--Frobenius, and suppose that there is an entry of $M$ whose spread is at least one.  Then there is a $k$ such that each entry of $M^k$ has spread at least one.
\end{lemma}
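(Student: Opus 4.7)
The plan is to apply Lemma \ref{l:spread} to a carefully factored power of $M$, using the Perron--Frobenius property to supply the positive-coefficient ``padding'' needed so that no cancellation can occur.

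First I would pick $N$ such that every entry of $M^N$ is a nonzero Laurent polynomial with positive coefficients; this exists by hypothesis. Let $(i_0,j_0)$ be an index for which $M_{i_0,j_0}$ has spread at least one, and set $k=2N+1$. I would then write out the $(i,j)$ entry of $M^k$ as
\[
(M^{2N+1})_{i,j}\;=\;\sum_{a,b}(M^N)_{i,a}\,M_{a,b}\,(M^N)_{b,j},
\]
where the sum runs over all pairs $(a,b)$. The terms with $M_{a,b}=0$ can be discarded, and every surviving factor $(M^N)_{i,a}$, $M_{a,b}$, $(M^N)_{b,j}$ is a nonzero rational Laurent polynomial with non-negative (indeed positive for the outer factors) coefficients.

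Next I would rewrite the sum in the form $\sum_{(a,b)} p_{a,b}\,q_{a,b}$, for instance with $p_{a,b}=(M^N)_{i,a}$ and $q_{a,b}=M_{a,b}(M^N)_{b,j}$, so that the collections $\{p_{a,b}\}$ and $\{q_{a,b}\}$ consist of nonzero Laurent polynomials with positive coefficients. For the distinguished pair $(a,b)=(i_0,j_0)$, the factor $q_{i_0,j_0}=M_{i_0,j_0}\,(M^N)_{j_0,j}$ contains the original entry $M_{i_0,j_0}$ of spread at least one; since $(M^N)_{j_0,j}$ has only positive coefficients, no cancellation can shrink this spread, so $q_{i_0,j_0}$ itself has spread at least one.

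Finally I would invoke Lemma \ref{l:spread}: since one of the $q_{a,b}$'s appearing in the expansion of $(M^{2N+1})_{i,j}$ has spread at least one, the total spread of the sum is at least one as well. Because $(i,j)$ was arbitrary, the choice $k=2N+1$ works uniformly for every entry of $M^k$. The only subtle step is the second one, where one must be sure that cancellations cannot decrease the spread; this is exactly what Lemma \ref{l:spread} is designed for, provided the decomposition into $\sum p_{a,b}q_{a,b}$ is arranged so that all factors genuinely have positive (not merely non-negative) coefficients, which the Perron--Frobenius hypothesis guarantees after passing to the power $M^N$.
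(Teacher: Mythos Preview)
Your argument is correct and uses the same ingredients as the paper (the Perron--Frobenius power $M^N$ and Lemma~\ref{l:spread}), but the organization is different. The paper first replaces $M$ by $M^N$, asserts that without loss of generality the $(1,1)$ entry of this power already has spread at least one, and then applies Lemma~\ref{l:spread} twice in succession: once to push the spread across the first row of $(M^N)^2$, and once more to push it into every entry of $(M^N)^3$. Your sandwich $M^{2N+1}=M^N\cdot M\cdot M^N$ is more direct: the original spread--carrying entry $M_{i_0,j_0}$ appears as a factor in the expansion of every $(M^{2N+1})_{i,j}$, so a single invocation of Lemma~\ref{l:spread} handles all entries at once. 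In particular, you sidestep the ``without loss of generality'' step in the paper, which really requires an argument that passing from $M$ to $M^N$ does not collapse the spread.

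One point to be explicit about: you claim each surviving middle factor $M_{a,b}$ has non-negative coefficients, so that $q_{a,b}=M_{a,b}(M^N)_{b,j}$ has positive coefficients as Lemma~\ref{l:spread} demands. That non-negativity is not part of the stated hypotheses; it holds in the intended application ($M=P_E$ coming from a carrying map), and the paper's proof relies on it just as tacitly. So this is not a gap relative to the paper, but both arguments are really proving the lemma for Perron--Frobenius matrices whose entries already have non-negative coefficients.
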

\begin{proof}
Passing to a power of $M$ if necessary, each entry of $M$ is a nonzero Laurent polynomial with positive coefficients.  Without loss of generality, $a_{1,1}$ is not a unit.  In particular, the spread of $a_{1,1}$ is nonzero.  By Lemma \ref{l:spread}, the spread of each entry in the first row of $M^2$ is least one.  By the same argument, the spread of each entry of $M^3$ is at least one.
\end{proof}

In the language of this section, we have a convenient rephrasing of Lemma \ref{l:spread1}:

\begin{lemma}\label{l:spread2}
Choose a basis $\{t_1,\ldots,t_h\}$ for $H$.  There is a $k$ such that each entry of $P_E^k$ has spread at least $1$ in each of the variables $\{t_1,\ldots,t_h\}$.
\end{lemma}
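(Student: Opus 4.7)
The plan is to combine Lemma \ref{l:spread1} with the argument of the preceding lemma, applied to each of the variables $t_1,\ldots,t_h$ in turn. First I would pass to a power of $P_E$ so that every entry is a nonzero Laurent polynomial in $\bZ[H]$ with strictly positive coefficients; this is possible by the Perron--Frobenius property. It then suffices to show that for each $i$ some entry of $P_E$ has positive $t_i$-spread, because once this is known the argument of the preceding lemma carries over verbatim: view $P_E$ as a matrix over $(\bZ[H'])\,[t_i,t_i^{-1}]$ with $H'=\langle t_j : j\neq i\rangle$, and apply Lemma \ref{l:spread} in the variable $t_i$ to conclude that some power $P_E^{k_i}$ has every entry of $t_i$-spread at least $1$. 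Since spreads are non-decreasing under further multiplication by positive-coefficient polynomials, any $k\geq k_i$ also works for $t_i$, and taking $k=\max_i k_i$ yields the lemma.

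The core of the proof thus reduces to the claim that, for each $i$, not every entry of $P_E$ lies in the subring $\bZ[H']\subset\bZ[H]$. This is where Lemma \ref{l:spread1} is used. Recall that $\theta$ equals the quotient $\det(uI - P_E)/\det(uI - P_V)$, and that every root of $\det(uI - P_V)$ is a root of unity, because $P_V$ acts as a twisted permutation on the lifted vertex set of $\tau$; in particular $\det(uI-P_V)$ factors in $\bZ[H][u]$ into pieces of the form $u^{n_k}-\epsilon_k t^{v_k}$ with $\epsilon_k=\pm 1$ and $v_k\in H$. If every entry of $P_E$ were in $\bZ[H']$, then $\det(uI-P_E)\in\bZ[H']\,[u]$, and since a factor of the denominator involving $t_i$ cannot divide a $t_i$-independent polynomial in $u$ without being $t_i$-independent itself, the quotient $\theta$ would also lie in $\bZ[H']\,[u]$. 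This contradicts Lemma \ref{l:spread1}, which asserts that no unit multiple of $\theta$ has all coefficients independent of $t_i$.

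I expect this reduction from a statement about $\theta$ to a statement about the entries of $P_E$ to be the main obstacle. The delicate point is handling the denominator $\det(uI-P_V)$: the fact that it consists only of cyclotomic-type factors built from monomials in $H$ is what prevents it from secretly absorbing essential $t_i$-dependence that would otherwise force $P_E$ to have a $t_i$-spreading entry. Once this reduction is in place, the multivariable spread propagation is entirely routine and follows the same steps as the proof of the preceding lemma, applied variable by variable and then combined into a single exponent $k$.
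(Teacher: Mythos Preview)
Your overall plan---pass to a Perron--Frobenius power, show for each $i$ that some entry of $P_E$ has $t_i$-spread at least one, then apply the preceding lemma variable by variable and take $k=\max_i k_i$---is exactly the route the paper intends when it calls this a ``rephrasing'' of Lemma~\ref{l:spread1}. However, there is a genuine gap in your reduction. You claim that ``some entry has positive $t_i$-spread'' is equivalent to ``not every entry lies in $\bZ[H']$,'' but the second condition is strictly weaker: an entry of the form $t_i^a\,p$ with $a\neq 0$ and $p\in\bZ[H']$ lies outside $\bZ[H']$ yet has $t_i$-spread zero. Your contradiction argument then assumes all entries lie in $\bZ[H']$, deduces $\det(uI-P_E)\in\bZ[H'][u]$, and invokes Lemma~\ref{l:spread1}; but this does not address the case where every entry of $P_E$ has $t_i$-spread zero without all of them being $t_i$-free.

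Concretely, the residual case is that $P_E$ is conjugate, by a diagonal matrix $D$ of $t_i$-monomials, to $t_i^a P'$ with $P'\in M_n(\bZ[H'])$. (A short computation shows this is forced once every entry of both $P_E$ and $P_E^2$ has $t_i$-spread zero.) Then $\det(uI-P_E)=\det(uI-t_i^aP')$, whose coefficient of $u^{n-m}$ is $t_i^{ma}$ times an element of $\bZ[H']$; for $a\neq 0$ this is not in $\bZ[H'][u]$, so your UFD step does not fire and Lemma~\ref{l:spread1} is not directly contradicted. To exclude this case you need to go back to the geometric input behind Lemma~\ref{l:spread1} rather than its statement: along the ray $\phi_j=0$ $(j\neq i)$, $\phi_i=s$, the dilatation would be $K^{1/(1-as)}$, which is either constant ($a=0$) or blows up at a single finite $s$, whereas it must blow up at \emph{both} boundary points of the fibered face. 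Adding this observation closes the gap; the rest of your argument (including the UFD remark that any factor of a $t_i$-free polynomial is a $t_i$-monomial times something $t_i$-free, which makes the detour through the explicit form of $\det(uI-P_V)$ unnecessary) is fine.
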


\section{The point--wise spectral gap}
We are now ready to prove the results leading up to Theorem \ref{t:main}.

\begin{proof}[Proof of Proposition \ref{p:pf}]
The fundamental observation for the proof of the proposition is the following: if $\{z_1,\ldots,z_n\}$ are complex numbers of modulus one and $\{a_1,\ldots,a_n\}$ are positive integers then the modulus of \[\sum_ia_iz_i\] is maximized when $\{z_1,\ldots,z_n\}$ all have the same argument.

Since $M$ is Perron--Frobenius, we may assume that each entry $m_{i,j}$ of $M$ is a nonconstant Laurent polynomial in one variable with spread at least one.  Write $K>1$ for the spectral radius of $\phi_0(M)$.  Consider the matrices $\phi(M)^n=(a_{i,j,n})$ and $\phi_0(M)^n=(b_{i,j,n})$.  Since $\phi(t)$ is an irrational point on the circle, it follows that \[C=\max_{i,j}\frac{|a_{i,j,1}|}{b_{i,j,1}}<1.\]

Consider the entries of $\phi(M)^{n+1}$.  We have the following inequalities: \[|a_{i,j,n+1}|=\Big|\sum_{k=1}^ma_{i,k,n}a_{k,j,1}\Big|\leq\sum_{k=1}^m|a_{i,k,n}||a_{k,j,1}|<\sum_{k=1}^m|a_{i,k,n}||b_{k,j,1}|,\] where $m$ is the dimension of $M$.  Notice that the rightmost expression exceeds the second rightmost by a factor of at least $C$.  It follows that the sum of the moduli of the entries of $\phi(M)^n$ is exceeded by the sum of the entries of $\phi_0(M)^n$ by a factor of at least $C^n$.  It follows that in the $\ell_1$--norm, we have \[||\phi(M)^n||_1\leq C^n\cdot ||\phi_0(M)^n||.\]
It follows that the spectral radius of $\phi(M)$ is at most $C\cdot K$.
\end{proof}

\begin{proof}[Proof of Corollary \ref{c:teich}]
Choose a basis $\{t_1,\ldots,t_h\}$ for $H$.  Since $\phi$ is a non--torsion point of the representation variety $\Hom(H,S^1)$, there is at least one basis element for $H$, say $t_1$, which is sent to an irrational point on $S^1$.  By Lemma \ref{l:spread2}, we may assume that each entry of $P_E$ has spread at least one in $t_1$, possibly after passing to a power.  By Proposition \ref{p:pf}, we have that the spectral radius of $\phi_0(P_E)$ is strictly larger than that of $\phi(P_E)$.
\end{proof}

\begin{proof}[Proof of Theorem \ref{t:main}]
For each nonidentity $\phi\in\Hom(H,S^1)$, we need to show that the spectral radius of the action of $\psi$ on $H_1(\Sigma_{g,n},\bC_{\phi})$ is strictly smaller than the dilatation of $\psi$.  The conclusion of the theorem will follow from the continuity of the function $\rho$ and the compactness of $\Hom(H,S^1)$.

For nonidentity torsion points in $\Hom(H,S^1)$, this follows from the fact that torsion points of $\Hom(H,S^1)$ give rise to finite abelian covers of $\Sigma_{g,n}$ to which $\psi$ lifts.  The dilatation of $\psi$ is a simple eigenvalue of the action of $\psi$ on the homology of that cover, whence it follows that the spectral radius on $H_1(\Sigma_{g,n},\bC_{\phi})$ is strictly smaller than the dilatation.

For a non--torsion point $\phi$, we have that the largest root of $\theta(u,\phi(t))$ is strictly smaller than the dilatation of $\psi$, by Corollary \ref{c:teich}.  Since the Alexander polynomial $A(u,\phi(t))$ is the characteristic polynomial of the action of $\psi$ on $H_1(\Sigma_{g,n},\bC_{\phi})$ and since $A(u,\phi(t))$ divides $\theta(u,\phi(t))$, we have that the spectral radius is in fact smaller than the dilatation.
\end{proof}

\begin{proof}[Proof of Corollary \ref{c:gap}]
Since the Alexander polynomial varies continuously on $\Hom(H,S^1)$ as does its largest root, and since its largest root at $\phi_0$ is the dilatation of $\psi$, the result follows.
\end{proof}

\section{Comparison with other homological representations}
When $g=0$, mapping class groups of the surfaces $\{\Sigma_{g,n}\}$ are essentially just braid groups.  We will assume in this case that the surface $\Sigma_{g,n}$ is just the $n$--times punctured disk $D_n$, so that the mapping class group is identified with the braid group $B_n$.  The braid group $B_n$ has the pure braid group $P_n$ as a subgroup of finite index, and it arises as the kernel of the permutation action of $B_n$ on the punctures of $D_n$.  The groups $B_n$ and $P_n$ have two well--studied homological representations, namely the Burau representation and the Gassner representation (see \cite{Bir} for more details).

The Burau representation is given by considering the infinite cyclic cover of $D_n$ obtained by sending a small loop about each puncture of $D_n$ to a fixed generator of $\bZ$.  The Burau representation is the associated homology representation.  The Gassner representation is the analogous representation of $P_n$, except that we consider the universal abelian cover of $D_n$.

These representations take a braid $\beta$ and return a matrix over a Laurent polynomial ring.  It can be shown that specializations of these matrices, given by sending the indeterminates to roots of unity, describe the actions of braid (resp. pure braid groups) on the homologies of all finite abelian covers with equal branching over all the punctures (resp. all finite abelian covers of the disk).  For more details, consult \cite{KobGeom}.

The Burau and the Gassner representations are useful in that one can easily associate a characteristic polynomial to each braid, and the roots of the specializations give eigenvalues of the actions of the braid on various deck group eigenspaces of the homology of covers.

\subsection{The simplest pseudo-Anosov braid}
Consider the group $B_3$, which is generated by the two braids $\sigma_1$ and $\sigma_2$, which interchange the first two and second two strands respectively, in the same direction.  It is well--known that the braid $\beta=\sigma_1\sigma_2^{-1}$ is pseudo-Anosov.  In fact, there is a double cover of $D_3$ which is homeomorphic to a torus with three punctures and one boundary component, and a lift of $\beta$ acts by the matrix \[\begin{pmatrix}2&1\\1&1\end{pmatrix}.\]  Under the Burau representation, we have that \[\sigma_1\mapsto\begin{pmatrix}t&t\\0&1\end{pmatrix},\] and \[\sigma_2\mapsto\begin{pmatrix}1&0\\t^{-1}&t^{-1}\end{pmatrix}.\]

Computing the characteristic polynomial, we obtain $p(u,t)=1-u(1+t+t^{-1})+u^2$, up to a unit in $\bZ[t,t^{-1}]$.  We can write a program which computes the largest root of $p(u,\zeta)$ as $\zeta$ ranges over roots of unity.  We can see that there is a unique maximum of $(3+\sqrt{5})/2$ at $\zeta=1$.  This follows from the fact that the function $1+t+t^{-1}$ on the unit circle is just the function $1+2\Re(t)$, which achieves a unique maximum on the circle.

\end{document}